\newtheorem{theorem}{Theorem}[section]
\newtheorem{lemma}[theorem]{Lemma}
\renewcommand{\l}{\lambda}
\newcommand{\R}{\mathbb{R}}
\newcommand{\p}{\partial}
\begin{document}
\title[Almost optimal local well-posedness for imBq equations]{Almost optimal local well-posedness for improved modified Boussinesq equations}

\author{Dan-Andrei Geba and Bai Lin}

\address{Department of Mathematics, University of Rochester, Rochester, NY 14627}
\email{dangeba@math.rochester.edu}
\address{Department of Mathematics, University of Rochester, Rochester, NY 14627}
\email{blin13@ur.rochester.edu}
\date{}

\begin{abstract}
In this article, we investigate a class of improved modified Boussinesq equations, for which we provide first an alternate proof of local well-posedness in the space $(H^s\cap L^\infty)\times (H^s\cap L^\infty)(\R)$ ($s\geq 0$) to the one obtained by Constantin and Molinet \cite{CM02}. Secondly, we show that the associated flow map is not smooth when considered from $H^s\times H^s(\R)$ into $H^s(\R)$ for $s<0$, thus providing a threshold for the regularity needed to perform a Picard iteration for these equations. 
\end{abstract}

\subjclass[2000]{35B30, 35Q55}
\keywords{improved modified Boussinesq equation, well-posedness, ill-posedness.}

\maketitle

\section{Introduction}

\subsection{Background}
Our goal is to study the initial value problem (IVP)
\begin{equation}
\begin{cases}
u_{tt}-u_{xx}-u_{xxtt}\,=\,(f(u))_{xx}, \qquad u=u(t,x)\in \mathbb{R}_+\times \R \to \R,\\
u(0,x)\,=\,u_0(x),\qquad u_t(0,x)\,=\,u_1(x),\\
\end{cases}
\label{main}
\end{equation}
for which the differential equation is known in the literature as the improved modified Boussinesq (imBq) equation. Initially, Makhankov \cite{M78} derived the equation with $f(u)=u^2$ in the context of ion-sound wave propagation and mentioned the one with $f(u)=u^3$ as modeling nonlinear Alfv\'{e}n waves. Later, Clarkson, LeVeque, and Saxton \cite{CLS86} discovered that the equations with either $f(u)=u^3/3$ or $f(u)=u^5/5$ describe the propagation of longitudinal deformation waves in an elastic rod. 

The imBq equation is also known as an improved frequency dispersion version of the classical Boussinesq equation
\begin{equation*}
u_{tt}-u_{xx}-u_{xxxx}\,=\,(u^2)_{xx},
\end{equation*}
derived in relation to shallow water waves. The latter has the dispersive relation 
\[
\omega^2=k^2-k^4, 
\]
which leads to a nonphysical instability when $k>1$. This is not the case for the imBq equation, whose 
dispersive relation is given by 
\[
\omega^2=\frac{k^2}{1+k^2}.
\]
In the same context, another well-known improved frequency dispersion version of the classical Boussinesq equation is the ``good" or ``well-posed" Boussinesq equation
\begin{equation}
u_{tt}-u_{xx}+u_{xxxx}\,=\,(f(u))_{xx},
\label{gBsq}
\end{equation}
which was found to describe electromagnetic waves in nonlinear dielectrics, magnetoelastic waves in antiferromagnets, and shape-memory alloys.

Past investigations concerning the IVP \eqref{main} mainly focused on two directions. The first one concentrated on the local existence and uniqueness of various types of solution (e.g., strong, classical) as well as on sufficient conditions for the global existence or the blow-up in finite time of such solutions. We mention here work by Constantin and Molinet \cite{CM02}, who looked at the equation
\begin{equation}
u_{tt}-u_{xxtt}\,=\,(F(u))_{xx}, \qquad F\in C^\infty (\R), \ F(0)=0, \label{CM-eq}
\end{equation}
and showed that the associated IVP is locally well-posed (LWP) for $(u(0),u_t(0))\in (H^s\cap L^\infty)(\R)\times (H^s\cap L^\infty)(\R)$, with $s\geq 0$ being arbitrary. Moreover, the same paper contains both continuation criterions for local-in-time solutions to be extended into global ones and conditions on $F$ which guarantee either global solutions or blow-up in finite time for certain data profiles. Similar results were obtained by Wang and Chen \cite{WC02} for the multidimensional problem (i.e, $x\in\R^n$, $n\geq 2$, and every $\p^2_x$ is replaced by $\Delta$).

The other type of question that was studied in connection to the IVP \eqref{main} is the existence and scattering of global small amplitude solutions. A very formal description of this question is as follows: what are the values of $p>1$ for which global, small $H^s$ solutions to \eqref{main}, with $|f(u)|\simeq |u|^{p}$, scatter?  Cho and Ozawa \cite{CO07} gave an almost optimal answer to this question both for \eqref{main} and the IVP for the ``good" Boussinesq equation \eqref{gBsq}. We refer the interested reader to this article and references therein for a comprehensive discussion of this issue.

\subsection{Description and statement of main results}
One topic which is usually studied in relation to evolution equations, especially dispersive ones, is the ill-posedness (IP) of the associated IVP. To our best knowledge, such an inquiry has not been conducted yet for \eqref{main}. The goal of this article is to do just that, in the case when $f(u)=\pm\, u^p$ and $p>1$ is an integer. Our results are in the same spirit with the ones originally obtained by Bourgain \cite{B97} and Tzvetkov \cite{T99} for the KdV equation and then also derived for other dispersive equations (e.g., Molinet and Ribaud \cite{MR04}, Bona and Tzvetkov \cite{BT09}, Geba, Himonas, and Karapetyan \cite{GHK14}). 

They establish loss of smoothness for the flow map, which is defined for a fixed time $t$ as 
\[
(u_0,u_1)\,\mapsto\,S(t)(u_0,u_1):=u(t).
\]
The loss of regularity occurs when the domain of the flow map is chosen to be $H^s\times H^s(\R)$, with $s<0$ being arbitrary. To argue for the optimality claimed in the title, we show that \eqref{main} is LWP in $(H^s\cap L^\infty)\times (H^s\cap L^\infty)(\R)$ when $s\geq 0$, by running a contraction argument for one of its integral formulations. In particular, this implies that the flow map is smooth as a map from $(H^s\cap L^\infty)\times (H^s\cap L^\infty)(\R)$ to $(H^s\cap L^\infty)(\R)$ for all times in the interval of existence. It is not clear that this conclusion can be drawn, at least easily, from the analysis done by Constantin and Molinet in \cite{CM02}. There, the equation \eqref{CM-eq} is recast in the form of an ODE system in Banach spaces and the LWP is obtained by classical Picard iteration. Another reason for the inclusion of our LWP argument is that it contains some new harmonic analysis facts that may be of independent interest. 

Following this, we derive the integral version of the IVP \eqref{main}, which is the central object of study from this point onward. This is obtained by first rewriting the imBq equation as
\begin{equation}
u_{tt}+P(D)u=-P(D)f(u), \qquad P(D):=\mathcal{F}_\xi^{-1}\frac{\xi^2}{1+\xi^2}\mathcal{F}_x,
\label{eq-PD}
\end{equation}
and then applying Duhamel's principle to infer
\begin{equation}
u(t)=L(u_0,u_1)(t)- \int_0^t\,L \left( 0, P(D)(f(u(\tau)))\right)(t-\tau)\,d\tau,
\label{main-int}
\end{equation}
where
\begin{equation}
\aligned
\widehat{L(v_0,v_1)(t)}(&\xi):=\cos(t \lambda(\xi))\, \widehat{v}_0(\xi)+\frac{\sin(t \lambda(\xi))}{\lambda(\xi)} \,\widehat{v}_1(\xi),\\ 
&\lambda(\xi):=\frac{|\xi|}{\langle\xi\rangle}=\frac{|\xi|}{(1+\xi^2)^{1/2}}.
\endaligned
\label{L}
\end{equation}

We can now state our main results.

\begin{theorem}
Consider the integral equation \eqref{main-int} with  $f(u)=\pm\, u^p$ and $p>1$ being an arbitrary integer. 

i) (LWP) If $s\geq 0$ and $(u_0,u_1)\in(H^s\cap L^\infty)\times (H^s\cap L^\infty)(\R)$, then there exist 
\[
T=T(\|(u_0,u_1)\|_{(H^s\cap L^\infty)\times (H^s\cap L^\infty)(\R)})>0
\] 
and a unique solution $u$ satisfying
\[
u\in C([0,T], (H^s\cap L^\infty)(\R)).
\] 
Moreover, 
\[S(t): (H^s\cap L^\infty)\times (H^s\cap L^\infty)(\R)\to (H^s\cap L^\infty)(\R),\quad S(t)(u_0,u_1):=u(t),\]
is smooth for all $t\in [0,T]$.

ii) (IP) If $s<0$, then there exists $T>0$ such that 
\[
S(t):  H^{s} \times H^{s}(\R) \to H^{s}(\R)
\]
does not admit a $p$-th order Fr\'{e}chet derivative at zero for all $0<t<T$.
\label{mainth}
\end{theorem}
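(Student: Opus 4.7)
My plan is to treat the two parts of Theorem \ref{mainth} by different arguments: a contraction mapping scheme on the integral equation \eqref{main-int} for (i), and a Bourgain--Tzvetkov obstruction to Fr\'echet smoothness of the flow map for (ii).

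For part (i), I would let $\Phi(u)(t)$ denote the right-hand side of \eqref{main-int} and show $\Phi$ is a contraction on the closed ball of radius $2R$ in $X_T := C\bigl([0,T],(H^s\cap L^\infty)(\R)\bigr)$, where $R$ is the norm of the data and $T$ is chosen small in terms of $R$. Three ingredients are needed. First, $P(D)$, whose symbol $\xi^2/(1+\xi^2) = 1 - 1/(1+\xi^2)$ corresponds to the identity minus convolution with $\tfrac12 e^{-|x|}\in L^1(\R)$, is bounded on both $H^s$ and $L^\infty$. Second, $H^s\cap L^\infty$ is a Banach algebra for $s\geq 0$ via the product estimate $\|uv\|_{H^s}\lesssim\|u\|_{H^s}\|v\|_{L^\infty}+\|u\|_{L^\infty}\|v\|_{H^s}$, which makes $u\mapsto u^p$ Lipschitz on bounded sets. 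Third, and most delicate, the propagator $L(v_0,v_1)$ must map $(H^s\cap L^\infty)\times(H^s\cap L^\infty)$ continuously into $H^s\cap L^\infty$. The $H^s$ bound is immediate from $|\cos(t\lambda(\xi))|\leq 1$ and $|\sin(t\lambda(\xi))/\lambda(\xi)|\leq t$; the $L^\infty$ bound is the main harmonic analysis point, which I would obtain by writing $\lambda(\xi)=1-\mu(\xi)$ with $\mu(\xi):=1-|\xi|/\langle\xi\rangle$ smooth and decaying like $\xi^{-2}$ at infinity, then using angle-addition to expand $\cos(t\lambda(\xi))=\cos(t)\cos(t\mu(\xi))+\sin(t)\sin(t\mu(\xi))$ (and similarly for $\sin(t\lambda(\xi))/\lambda(\xi)$) so that each $\xi$-dependent factor is the Fourier multiplier of an $L^1$ function. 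Standard fixed-point theory then yields existence, uniqueness, and persistence of regularity, and smoothness of $S(t)$ follows from the analytic implicit function theorem applied to the polynomial map $u\mapsto u-\Phi(u)$.

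For part (ii), I would argue by contradiction. If $S(t)$ admits a $p$-th Fr\'echet derivative at $0$ from $H^s\times H^s$ to $H^s$, then iterating \eqref{main-int} identifies the $p$-homogeneous part of the Taylor expansion in $(u_0,u_1)$ with
\[
N_p(u_0,u_1)(t) := \mp\int_0^t L\bigl(0,\,P(D)\bigl(L(u_0,u_1)(\tau)\bigr)^p\bigr)(t-\tau)\,d\tau,
\]
which must therefore obey $\|N_p(u_0,u_1)\|_{H^s}\lesssim\|(u_0,u_1)\|_{H^s\times H^s}^p$. I would test this with $u_1\equiv 0$ and $\widehat{u_0}(\xi):=c_N\bigl(\chi_{[N,N+1]}(\xi)+\chi_{[-N-1,-N]}(\xi)\bigr)$, $c_N\sim N^{-s}$, so that $u_0$ is real and $\|u_0\|_{H^s}\sim 1$. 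Expanding each $\cos(\tau\lambda(\xi_j))$ as a sum of two exponentials and computing the $\tau$-integral, $\widehat{N_p(u_0,0)(t)}(\xi)$ decomposes into contributions indexed by sign vectors $(\sigma,\epsilon)\in\{\pm\}^p\times\{\pm\}^p$, with phases $\lambda(\xi)\pm\sum_j\epsilon_j\lambda(\xi_j)$, $\xi_j\in\sigma_j[N,N+1]$, $\xi=\sum_j\xi_j$. Since $\lambda(\xi_j)\to 1$ as $N\to\infty$ while $\lambda(\xi)\in[0,1)$, resonance (phase small uniformly in $N$) occurs either for $|\xi|=O(1)$ with $\sum_j\epsilon_j=0$ (possible when $p$ is even) or for $|\xi|\sim N$ with $\sum_j\epsilon_j=\pm 1$ (possible when $p$ is odd). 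Bookkeeping the resonant contributions yields $\|N_p(u_0,0)(t)\|_{H^s}\gtrsim t^2 N^{-sp}$ in the former case and $\gtrsim t\,N^{-s(p-1)}$ in the latter; both diverge as $N\to\infty$ because $s<0$, contradicting the required upper bound.

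The crux of the IP argument, and what I expect to be the main obstacle, is confirming that the resonant sectors are the genuine leading order rather than being cancelled by the oscillatory sum over non-resonant $(\sigma,\epsilon)$. I would address this either by integrating by parts in $\tau$ on non-resonant terms (to show they are $O(1)$ in $N$) or, if easier, by replacing $\widehat{u_0}$ with a complex-valued amplitude supported on a single interval $[N,N+1]$, which isolates a unique resonant configuration and removes the sign-cancellation issue. In part (i), the only subtle step is the $L^\infty$ multiplier bound for $L$, which should follow from the decomposition of $\lambda$ sketched above together with standard convolution estimates.
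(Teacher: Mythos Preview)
Your overall plan is correct and close to the paper's, but the two parts differ in execution in instructive ways.

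\textbf{Part (i).} The contraction scheme, the algebra property of $H^s\cap L^\infty$, and the $H^s$ bounds are identical to the paper's. The difference is in the $L^\infty$ multiplier bounds. The paper proves that $\lambda^2$, $e^{\pm it\lambda}$, and $\sin(t\lambda)/\lambda$ lie in the homogeneous Besov space $\dot B^{1/2}_{2,1}(\R)$, invoking the fact that this space embeds into the Fourier multipliers of $L^\infty$. Your route---write the symbol as a constant plus the Fourier transform of an $L^1$ function---is more elementary and perfectly viable, but two details in your sketch need fixing. First, $\mu(\xi)=1-|\xi|/\langle\xi\rangle$ is \emph{not} smooth: it has a corner at $\xi=0$ (one checks $\mu'(0^\pm)=\mp 1$). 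This is harmless, since $m,m'\in L^2$ already forces $\check m\in L^1$, and your symbols satisfy this; but the word ``smooth'' is wrong. Second, the angle-addition trick breaks for $m_3(\xi)=\sin(t\lambda)/\lambda$: after expanding the numerator you would divide $\cos(t\mu)$ and $\sin(t\mu)$ by $\lambda$, and $1/\lambda$ blows up at the origin. The clean fix is to bypass the decomposition and check directly that $m_3-\sin t$ and its derivative are in $L^2$ (both are $O(\langle\xi\rangle^{-2})$ and $O(\langle\xi\rangle^{-3})$ respectively, and $m_3'$ is continuous across $0$). With these corrections your argument goes through and is arguably simpler than the Besov approach.

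\textbf{Part (ii).} Your identification of $N_p$ as the $p$-th Fr\'echet derivative and your choice of output frequencies (low for $p$ even, $\sim N$ for $p$ odd) match the paper exactly. The key difference is the choice of data. You take $u_1\equiv 0$, so that $\widehat{L(u_0,0)(\tau)}(\eta)=\cos(\tau\lambda(\eta))\widehat{u_0}(\eta)$ and you must expand each cosine into two exponentials, producing an additional sum over $\epsilon\in\{\pm\}^p$ on top of the sum over interval signs $\sigma\in\{\pm\}^p$. This is precisely the source of the cancellation worry you flag. The paper sidesteps it by choosing $\widehat{u_1^N}=-i\lambda(\varphi_{B_N}-\varphi_{-B_N})$, which forces $\widehat{L(u_0^N,u_1^N)(\tau)}=e^{-i\tau\lambda}\varphi_{B_N}+e^{i\tau\lambda}\varphi_{-B_N}$, a single exponential on each block. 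Then the phase sign is locked to the interval sign, there is no independent $\epsilon$-sum, and every surviving term in the chosen output window has the same resonant structure---so no cancellation is possible. Your approach still works (for instance, as $N\to\infty$ one has $\lambda(a_j)\to 1$ uniformly on $B_N$, so $\prod_j\cos(\tau\lambda(a_j))\to\cos^p\tau$ and the $\tau$-integral has a nonzero limit), but the paper's choice of $u_1$ is the cleaner device and is worth knowing.
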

\noindent The LWP part of this theorem will be addressed in the next section, whereas the argument for IP will occupy the final one.


\section{LWP argument}

In proving the LWP claim, we rely on the classical approach of verifying that the right-hand side of \eqref{main-int}, when seen as a functional in $u$ (with the data $u_0$ and $u_1$ being fixed), is a contraction on a suitably chosen closed ball of a Banach space. 

For this purpose, we are first concerned with the mapping properties of the multiplier operators $P(D)$ (defined in \eqref{eq-PD}) and
\begin{equation}
Q_t(D):=\mathcal{F}_\xi^{-1}\cos(t\l(\xi))\mathcal{F}_x, \qquad R_t(D):=\mathcal{F}_\xi^{-1}\frac{\sin(t\l(\xi))}{\l(\xi)}\mathcal{F}_x,
\label{qtrt-def}
\end{equation}
where $t\in\R$ is arbitrary, yet fixed. Given the trivial bounds
\begin{equation*}
0\leq \l^2(\xi)=\frac{\xi^2}{1+\xi^2}<1, \qquad |\cos(t\l(\xi))|\leq 1, \qquad \left|\frac{\sin(t\l(\xi))}{\l(\xi)}\right|\leq |t|,
\end{equation*}
Plancherel's formula implies\footnote{From here on out, for a functional space $Y$, we write $Y=Y(\R)$ as the majority of such norms refers to this particular situation.}
\begin{equation}
\|P(D)v\|_{H^s}\leq \|v\|_{H^s}, \quad \|Q_t(D)v\|_{H^s}\leq \|v\|_{H^s}, \quad \|R_t(D)v\|_{H^s}\leq |t| \|v\|_{H^s}.
\label{hs-bd}
\end{equation}
 
Next, and this is one of the novelties in our paper, we show that the symbols of these operators are also Fourier multipliers on $L^\infty$ in the sense of Definition 6.1.1 in Bergh-L\"{o}fstr\"{o}m \cite{BL76}. By comparison, Constantin and Molinet proved in \cite{CM02} that $P(D)$ maps $H^s\cap L^\infty$ into itself for all $s\geq 0$. In arguing for this claim, we rely on a number of facts, some of which are contained in the book by Bergh and L\"{o}fstr\"{o}m. One\footnote{This is the conclusion of Exercise 16 on page 164 in \cite{BL76}.} is that the homogeneous Besov space $\dot{B}^{n/2}_{2,1}(\R^n)$ is a subspace of the normed space of Fourier multipliers on $L^\infty(\R^n)$. A second fact\footnote{This can be inferred from Theorem 6.3.1 in \cite{BL76}. See also Section 3.2 in Shatah-Struwe \cite{SS98}.} is the equivalence between the original seminorm for $\dot{B}^{n/2}_{2,1}(\R^n)$ and the one given by
\begin{equation*}
\|w\|^*_{\dot{B}^{n/2}_{2,1}(\R^n)}:=\int_{\R^n}\frac{\|w(\cdot+h)-w(\cdot)\|_{L^2(\R^n)}}{|h|^{n+\frac{1}{2}}}\,dh.
\end{equation*}
We will also use the following integration result\footnote{This is a special case of Lemma 4.2 in Ginibre-Tsutsumi-Velo \cite{GTV97}.}:
\begin{equation}
\int_\R\frac{1}{\langle z-a\rangle^2\langle z-b\rangle^4}\,dz\lesssim \frac{1}{\langle a-b\rangle^2}, \qquad (\forall)\,a, b\in\R.
\label{int}
\end{equation}

\begin{lemma}
The symbols $m_1(\xi)=\l^2(\xi)$, $m_2(\xi)=e^{\pm it\l(\xi)}$, and $m_3(\xi)=\sin(t\l(\xi))/\l(\xi)$ are all Fourier multipliers on $L^\infty$ and
\begin{align}
&\|P(D)v\|_{L^\infty}\lesssim \|v\|_{L^\infty}, \label{p-li}\\ 
\|Q_t(D)v\|_{L^\infty}\lesssim |t|&\|v\|_{L^\infty},\qquad
\|R_t(D)v\|_{L^\infty}\lesssim \max\{|t|, |t|^3\} \|v\|_{L^\infty}.\label{qrt-li}
\end{align}
\end{lemma}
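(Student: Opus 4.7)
The plan is to invoke the hinted criterion that the homogeneous Besov space $\dot{B}^{1/2}_{2,1}(\R)$ embeds in the space of $L^\infty$-Fourier multipliers, with the equivalent seminorm
\[
\|m\|^* \,:=\, \int_\R \frac{\|m(\cdot+h) - m(\cdot)\|_{L^2}}{|h|^{3/2}} \, dh.
\]
So for each $m_i$ the task reduces to controlling its $L^2$ modulus of continuity in a way that is integrable against $|h|^{-3/2}$.

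The shared input I would establish first is the pointwise inequality
\[
|\lambda(\xi+h) - \lambda(\xi)| \, \lesssim \, \frac{|h|}{\langle\xi\rangle\langle\xi+h\rangle}.
\]
I would obtain it by factoring $\lambda^2(\xi+h) - \lambda^2(\xi) = h(2\xi+h)/(\langle\xi\rangle^2\langle\xi+h\rangle^2)$ exactly, then dividing by $\lambda(\xi+h) + \lambda(\xi) = (|\xi+h|\langle\xi\rangle + |\xi|\langle\xi+h\rangle)/(\langle\xi\rangle\langle\xi+h\rangle)$, and finally using $|2\xi+h| \leq |\xi| + |\xi+h|$ together with $|\xi+h|\langle\xi\rangle + |\xi|\langle\xi+h\rangle \geq |\xi| + |\xi+h|$ (the latter because $\langle\cdot\rangle \geq 1$) to bound the resulting ratio by $1$. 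This trick avoids the non-differentiability of $\lambda$ at the origin.

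With this in hand, each symbol yields to squaring a pointwise difference bound and integrating in $\xi$ via \eqref{int}. For $m_1 = \lambda^2$, the explicit numerator combined with $(2\xi+h)^2 \leq 2(\xi^2 + (\xi+h)^2)$ produces two integrals of exactly the shape handled by \eqref{int}, giving $\|m_1(\cdot+h) - m_1(\cdot)\|_{L^2} \lesssim |h|/\langle h\rangle$, which is integrable against $|h|^{-3/2}$. For $m_2 = e^{\pm it\lambda}$, combining $|e^{i\alpha} - e^{i\beta}| \leq |\alpha - \beta|$ with the $\lambda$-estimate and then applying the same integration gives $\|m_2(\cdot+h) - m_2(\cdot)\|_{L^2} \lesssim |t||h|/\langle h\rangle$, hence a seminorm of order $|t|$, which controls $Q_t(D)$ after averaging the two choices of sign. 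For $m_3 = \sin(t\lambda)/\lambda$, I would use the representation $m_3(\xi) = t\int_0^1 \cos(st\lambda(\xi))\,ds$ to derive the two complementary bounds $|m_3(\xi+h) - m_3(\xi)| \leq 2|t|$ and $|m_3(\xi+h) - m_3(\xi)| \lesssim t^2|h|/(\langle\xi\rangle\langle\xi+h\rangle)$, and then interpolate via $\min(A,B) \leq A^\alpha B^{1-\alpha}$ for some $\alpha \in (0,1/2)$ to obtain a polynomial-in-$|t|$ bound for $\|m_3\|^*$ compatible with $\max\{|t|,|t|^3\}$.

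The main obstacle is the $m_3$ step: the crude bound $2|t|$ and the decaying bound $t^2|h|/(\langle\xi\rangle\langle\xi+h\rangle)$ live on genuinely different scales, so the interpolation exponent must be tuned strictly below $1/2$ to keep both the $\xi$-integration (which requires a power greater than $1$ on $\langle\xi\rangle\langle\xi+h\rangle$) and the subsequent $h$-integration against $|h|^{-3/2}$ near the origin simultaneously convergent. Everything else is bookkeeping around the $\lambda$-estimate and the integration lemma \eqref{int}.
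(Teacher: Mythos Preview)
Your proof is correct and takes a genuinely different route from the paper's. The paper splits the $h$-integral defining $\|\cdot\|^*$ into $|h|\leq 2$ and $|h|\geq 2$: for small $h$ it uses $\|m(\cdot+h)-m(\cdot)\|_{L^2}\leq |h|\,\|m'\|_{L^2}$ together with the pointwise derivative bounds $|m_1'|,\,|m_2'|/|t|,\,|m_3'|/|t|^3\lesssim \langle\xi\rangle^{-3}$; for large $h$ it derives the asymmetric estimate $|\lambda(\xi+h)-\lambda(\xi)|\lesssim |h|\max\{\langle\xi\rangle^{-2}\langle\xi+h\rangle^{-1},\,\langle\xi\rangle^{-1}\langle\xi+h\rangle^{-2}\}$ (whose square matches \eqref{int} exactly) and, for $m_3$, the algebraic decomposition $m_3(\xi+h)-m_3(\xi)=[\sin(t\lambda(\xi+h))-\sin(t\lambda(\xi))]/\lambda(\xi+h)+\sin(t\lambda(\xi))[\lambda(\xi+h)^{-1}-\lambda(\xi)^{-1}]$ combined with $\max\{\lambda(\xi),\lambda(\xi+h)\}\simeq 1$ when $|h|\geq 2$. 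You instead avoid any case split via the uniform symmetric bound $|\lambda(\xi+h)-\lambda(\xi)|\lesssim |h|/(\langle\xi\rangle\langle\xi+h\rangle)$ and, for $m_3$, replace the algebraic decomposition by the integral representation $m_3=t\int_0^1\cos(st\lambda)\,ds$ plus interpolation. The paper's argument is slightly more elementary and invokes \eqref{int} verbatim; yours is more streamlined but needs the (equally standard) variant $\int \langle\xi\rangle^{-p}\langle\xi+h\rangle^{-p}\,d\xi\lesssim\langle h\rangle^{-p}$ for $p>1$, which is not literally \eqref{int}. Your resulting bound $\|m_3\|^*\lesssim|t|^{2-\alpha}$ is indeed dominated by $\max\{|t|,|t|^3\}$, so the stated conclusion follows.
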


\begin{proof}
Based on the facts listed above, it is clear that the lemma is proved if we show that
\begin{equation*}
\|m_1\|^*_{\dot{B}^{1/2}_{2,1}}\lesssim 1,\qquad \|m_2\|^*_{\dot{B}^{1/2}_{2,1}}\lesssim |t|, \qquad \|m_3\|^*_{\dot{B}^{1/2}_{2,1}}\lesssim \max\{|t|, |t|^3\}.
\end{equation*}
A direct application of the Cauchy-Schwarz inequality yields
\begin{equation*}
\|w(\cdot+h)-w(\cdot)\|_{L^2}\leq |h|\|w'\|_{L^2}
\end{equation*}
and straightforward computations provide us with the bounds
\begin{equation*}
|m'_1(\xi)|\lesssim \frac{1}{\langle\xi\rangle^3},\qquad |m'_2(\xi)|\lesssim \frac{|t|}{\langle\xi\rangle^3}, \qquad |m'_3(\xi)|\lesssim \frac{|t|^3}{\langle\xi\rangle^3}.
\end{equation*}
Therefore, we infer that
\begin{align*}
\int_{|h|\leq 2}\frac{\|m_1(\cdot+h)-m_1(\cdot)\|_{L^2}}{|h|^{3/2}}\,dh&\lesssim 1,\\
\int_{|h|\leq 2}\frac{\|m_2(\cdot+h)-m_2(\cdot)\|_{L^2}}{|h|^{3/2}}\,dh&\lesssim|t|,\\
\int_{|h|\leq 2}\frac{\|m_3(\cdot+h)-m_3(\cdot)\|_{L^2}}{|h|^{3/2}}\,dh&\lesssim|t|^3.
\end{align*}

All which is left to discuss is the scenario when $|h|\geq 2$. In this case, since
\[
|\xi|+|\xi+h|\geq |h|,
\]
it follows that 
\begin{equation}
\max\{|\xi|, |\xi+h|\}\simeq\max\{\langle\xi\rangle, \langle\xi+h\rangle\}.
\label{hg2}
\end{equation}
Coupled to
\begin{equation}
\l(\xi+h)-\l(\xi)=\frac{h(2\xi+h)}{\langle\xi+h\rangle\langle\xi\rangle(|\xi+h|\langle\xi\rangle+|\xi|\langle\xi+h\rangle)},
\label{la-lb}
\end{equation}
this implies 
\begin{equation*}
|\l(\xi+h)-\l(\xi)|\lesssim |h|\max\left\{\frac{1}{\langle\xi+h\rangle\langle\xi\rangle^2}, \frac{1}{\langle\xi+h\rangle^2\langle\xi\rangle}\right\}.
\end{equation*}
As a consequence of \eqref{int}, we deduce
\begin{equation}
\int_{|h|\geq 2}\frac{\|\l(\cdot+h)-\l(\cdot)\|_{L^2}}{|h|^{3/2}}\,dh\lesssim 1.
\label{lhg2}
\end{equation}

On the other hand, due to $0\leq\l(\xi)<1$, we have
\begin{equation}
|m_1(\xi+h)-m_1(\xi)|\leq 2|\l(\xi+h)-\l(\xi)|.
\label{m1l}
\end{equation}
We also have
\begin{equation}
|m_2(\xi+h)-m_2(\xi)|= 2\left|\sin(t(\l(\xi+h)-\l(\xi))/2)\right|\leq |t||\l(\xi+h)-\l(\xi)|.
\label{m2l}
\end{equation}
For $m_3$, we can write
\[
m_3(\xi+h)-m_3(\xi)= \frac{\sin(t\l(\xi+h))-\sin(t\l(\xi))}{\l(\xi+h)}+\sin(t\l(\xi))\left(\frac{1}{\l(\xi+h)}-\frac{1}{\l(\xi)}\right)
\]  
which leads to
\begin{equation*}
|m_3(\xi+h)-m_3(\xi)|\leq \frac{2|t||\l(\xi+h)-\l(\xi)|}{\l(\xi+h)}.
\end{equation*}
By symmetry, we obtain
\begin{equation*}
|m_3(\xi+h)-m_3(\xi)|\leq \frac{2|t||\l(\xi+h)-\l(\xi)|}{\max\{\l(\xi+h),\l(\xi)\}}.
\end{equation*}
Due to \eqref{hg2}, we infer
\begin{equation*}
\max\{\l(\xi+h),\l(\xi)\}\simeq 1,
\end{equation*}
and, subsequently,
\begin{equation*}
|m_3(\xi+h)-m_3(\xi)|\lesssim |t||\l(\xi+h)-\l(\xi)|.
\end{equation*}
Jointly with \eqref{lhg2}-\eqref{m2l}, this estimate implies
\begin{align*}
\int_{|h|\geq 2}\frac{\|m_1(\cdot+h)-m_1(\cdot)\|_{L^2}}{|h|^{3/2}}\,dh&\lesssim 1,\\
\int_{|h|\geq 2}\frac{\|m_2(\cdot+h)-m_2(\cdot)\|_{L^2}}{|h|^{3/2}}\,dh&\lesssim|t|,\\
\int_{|h|\geq 2}\frac{\|m_3(\cdot+h)-m_3(\cdot)\|_{L^2}}{|h|^{3/2}}\,dh&\lesssim|t|,
\end{align*}
and the proof of the lemma is concluded.
\end{proof}

Now, we can start in earnest the LWP argument. For fixed $u_0$ and $u_1$, we denote the right-hand side of \eqref{main-int} by $Z_{u_0,u_1}(u)$ and, using \eqref{eq-PD} and \eqref{qtrt-def}, we infer
\begin{equation*}
Z_{u_0,u_1}(u)= Q_t(D)u_0+R_t(D)u_1-\int_0^tR_{t-\tau}(D)(P(D)(f(u(\tau))))\,d\tau.
\end{equation*}
The well-known Moser-type estimate
\begin{equation*}
\|vw\|_{H^s}\lesssim \|v\|_{H^s}\|w\|_{L^\infty}+\|v\|_{L^\infty}\|w\|_{H^s},
\end{equation*}
which is valid for all $s\geq 0$, implies that $H^s\cap L^\infty$ is an algebra in this case. If we assume $0\leq t\leq T$ and take advantage of \eqref{hs-bd},\eqref{p-li}, and \eqref{qrt-li}, we deduce
\begin{equation*}
\aligned
\|Z_{u_0,u_1}(u)(t)\|_{H^s\cap L^\infty}\lesssim\,&\max\{1,t\} \|u_0\|_{H^s\cap L^\infty} +\max\{t,t^3\} \|u_1\|_{H^s\cap L^\infty}\\
&+\int_0^t\max\{t-\tau,(t-\tau)^3\} \|f(u(\tau))\|_{H^s\cap L^\infty}\,d\tau.
\endaligned
\end{equation*}
Choosing now $f(u)$ to be as in Theorem \ref{mainth}, it follows that
\begin{equation*}
\aligned
\|Z_{u_0,u_1}(u)\|_{C([0,T]; H^s\cap L^\infty)}\lesssim\,&\max\{1,T\} \|u_0\|_{H^s\cap L^\infty} +\max\{T,T^3\} \|u_1\|_{H^s\cap L^\infty}\\
&+\max\{T^2,T^4\} \|u\|^p_{C([0,T]; H^s\cap L^\infty)}.
\endaligned
\end{equation*}
Therefore, by working with $T<1$, we obtain that for\footnote{This is the ball of radius $R$ centered at the origin in the Banach space $C([0,T]; H^s\cap L^\infty)$.} $B(0,R)\subset C([0,T]; H^s\cap L^\infty)$ one has  
\begin{equation*}
u\in B(0,R)\mapsto Z_{u_0,u_1}(u)\in B(0,R)
\end{equation*}
if
\begin{equation*}
R\simeq \|u_0\|_{H^s\cap L^\infty} + \|u_1\|_{H^s\cap L^\infty}\quad\text{and}\quad T\lesssim R^{-\frac{p-1}{2}}.
\end{equation*}
Furthermore, using a similar argument, we derive
\begin{equation*}
\aligned
\|Z_{u_0,u_1}&(u)-Z_{u_0,u_1}(\tilde{u})\|_{C([0,T]; H^s\cap L^\infty)}\\
&\lesssim\,T^2 \|u-\tilde{u}\|_{C([0,T]; H^s\cap L^\infty)}\left(\|u\|^{p-1}_{C([0,T]; H^s\cap L^\infty)}+\|\tilde{u}\|^{p-1}_{C([0,T]; H^s\cap L^\infty)}\right)\\
&\lesssim\,T^2 R^{p-1}\|u-\tilde{u}\|_{C([0,T]; H^s\cap L^\infty)}.
\endaligned
\end{equation*}
Thus, with an eventual additional adjustment on the size of $T$, we conclude that $u\mapsto Z_{u_0,u_1}(u)$ is a contraction on the ball $B(0,R)$ and, consequently, a unique solution to the integral equation \eqref{main-int} exists on the time interval $[0,T]$. The smoothness of the flow map follows then by an application of the analytic version of the implicit function theorem. With this, the LWP part of Theorem \ref{mainth} has been proved.

\section{IP argument}
As explained in the introduction, the scheme for proving IP consists in showing that a putative flow map, when acting from $H^s\times H^s$ into $H^s$, fails to be smooth for any $s<0$. Our approach is very similar to the one used in \cite{GHK14}, from which it borrows the main framework. 

From \eqref{main-int}, we see that if $f(u)=\pm u^p$ then 
\begin{equation*}
\aligned
S(t)(u_0,u_1)\,=\,&L(u_0,u_1)(t)\\
&\mp\, \int_0^t\,L \left( 0, P(D)\left((S(\tau)(u_0,u_1))^p\right)\right)(t-\tau)\,d\tau, \quad (\forall)\, t\in[0,T],
\endaligned
\end{equation*}
where $T>0$ is such that the flow map makes sense on $[0,T]$ near the origin in $H^s\times H^s$. When the flow map is sufficiently regular, we can use this equation to compute explicitly (by relying on implicit differentiation) its Fr\'{e}chet derivatives at the origin. Precisely, we have 
\begin{equation*}
\aligned
DS(t)_{(v_0,v_1)}&(u_0,u_1)=L(v_0,v_1)(t)\\
&\mp p\int_0^t\,L \left( 0,P(D)\left(DS(\tau)_{(v_0,v_1)}(u_0,u_1)\left(S(\tau)(u_0,u_1)\right)^{p-1}\right)\right)(t-\tau)\,d\tau,
\endaligned\label{ds}
\end{equation*}
where $DS(t)_{(v_0,v_1)}(u_0,u_1)$ stands for the first order Fr\'{e}chet derivative of the flow map at $(u_0,u_1)$, evaluated for $(v_0,v_1)$. Given that LWP ensures $S(t)(0,0)=0$, we deduce
\begin{equation*}
DS(t)_{(v_0,v_1)}(0,0)\,=\,L(v_0,v_1)(t).
\label{ds0}
\end{equation*} 
Arguing along the same lines, we further derive
\begin{equation*}
D^kS(t)_{(v^1_0,v^1_1),\ldots, (v^k_0,v^k_1)}(0,0)=0, \qquad (\forall)\,1<k<p,
\end{equation*}
and, eventually,
\begin{equation}
\aligned
 D^p&S(t)_{(v^1_0,v^1_1),\ldots, (v^p_0,v^p_1)}(0,0)\\
 &\qquad=\mp p!\,\int_0^t\,L \left( 0,P(D)\left(L(v^1_0,v^1_1)(\tau)\,\cdot \ldots\cdot\,L(v^p_0,v^p_1)(\tau)\right)\right)(t-\tau)\,d\tau,
\endaligned
\label{dps}
\end{equation}

Hence, if the flow map had $C^p$ regularity at the origin, the estimate  
\begin{equation*}
 \left\|D^pS(t)_{(v^1_0,v^1_1),\ldots, (v^p_0,v^p_1)}(0,0)\right\|_{H^s}\,\lesssim\,\prod_{j=1}^{p}\,\left\|(v^j_0, v^j_1)\right\|_{H^{s}\times H^s}
\end{equation*}
would hold true uniformly for $t\in [0,T]$. However, when $s<0$, we show that this bound fails by constructing a sequence $\left(u^N_0,u^N_1\right)_N\subset H^s\times H^{s}$ satisfying
\begin{equation}
\lim_{N\to \infty}\, \frac{\left\|D^pS(t)_{(u^N_0,u^N_1),\ldots, (u^N_0,u^N_1)}(0,0)\right\|_{H^s}}{\left(\left\|u_0^N\right\|_{H^{s}} + \left\|u_1^N\right\|_ {H^{{s}}}\right)^p}\,=\,\infty, \qquad (\forall)\, 0 < t < T.
\label{ratio}
\end{equation}

For ease of notation, we use onward the abbreviation
\begin{equation*}
A_p(u_0,u_1)(t)\,:=\,D^pS(t)_{(u_0,u_1),\ldots, (u_0,u_1)}(0,0).
\end{equation*}
We work with data given by
\begin{equation}
\aligned
\widehat{u^N_0}(\xi)\,=\,\varphi_{B_N}(\xi)\,+\,\varphi_{-B_N}(\xi),\qquad
\widehat{u^N_1}(\xi)\,=\,-i \lambda(\xi)\,\left(\varphi_{B_N}(\xi)\,-\,\varphi_{-B_N}(\xi)\right),
\endaligned
\label{u0n-u1n}
\end{equation}
where $(B_N)_{N\geq 1}$ is a sequence of subsets of $\R$ and $\varphi_A$ is the characteristic function of the set $A$. It is easy to check that 
\begin{equation*}
\overline{\widehat{u^N_0}(\xi)}\,=\,\widehat{u^N_0}(-\xi),\qquad \overline{\widehat{u^N_1}(\xi)}\,=\,\widehat{u^N_1}(-\xi),
\end{equation*}
and, thus, our data are real-valued. By using \eqref{L} and \eqref{dps}, we infer 
\begin{equation*}
\widehat{L(u^N_0,u^N_1)(t)}(\xi)\,=\, e^{-it\lambda(\xi)}\varphi_{B_N}(\xi)\,+\,e^{it\lambda(\xi)}\varphi_{-B_N}(\xi)
\label{l+}
\end{equation*}
and, subsequently,
\begin{equation}
\aligned
\widehat{A_p(u^N_0,u^N_1)(t)}&(\xi)\\ 
= \mp p!\,\lambda(\xi)&\int_0^t \,\sin((t-\tau) \lambda(\xi))
\left\{\int\limits_{\eta_1+\ldots+\eta_p=\xi}
\prod_{j=1}^{p}\varphi_{\pm B_N}(\eta_j)\cdot e^{\mp i\tau\lambda(\eta_j)}\right\}d\tau,
\endaligned
\label{apn}
\end{equation}
where the inner integral is defined according to
\[
\int\limits_{\eta_1+\ldots+\eta_p=\xi}f:=\int_{\R^{p-1}}f(\eta_1,\ldots, \eta_{p-1}, \xi-\eta_1-\ldots-\eta_{p-1})\,d\eta_1\ldots d\eta_{p-1}
\]
and, with its integrand, we assumed an Einstein summation convention for the symbol $\pm$; i.e., if $\eta\in \pm \,B_N$, then the corresponding exponent is $\mp \,it\lambda(\eta)$.

As one notices, the generic term in the time integral which yields $\widehat{A_p(u^N_0,u^N_1)(t)}(\xi)$ is of the type 
\begin{equation*}
\int_0^t\,\sin(\alpha(t-\tau))\,e^{i\beta \tau}\,d\tau,
\end{equation*}
where
\begin{equation}
\aligned
&\alpha=\lambda(\xi), \qquad \beta= -\,\epsilon_1\lambda(a_1)-\epsilon_2\lambda(a_2)-\ldots-\epsilon_p\lambda(a_p),\\
\xi=\ & \epsilon_1 a_1+\epsilon_2 a_2+ \ldots+\epsilon_p a_p, \quad \epsilon_j=\pm \,1, \ a_j\in B_N, \ (\forall)1\leq j\leq p.
\endaligned
\label{ab}
\end{equation}
Moreover, a direct computation reveals that for real parameters $\alpha$ and $\beta$ we have
\begin{equation}
\text{Re}\left\{\int_0^t\,\sin(\alpha(t-\tau))\,e^{i\beta \tau}\,d\tau\right\}=
\begin{cases}
\frac{\alpha}{\alpha^2-\beta^2}(\cos (\beta t)-\cos (\alpha t)), \ &\text{for}\ |\alpha|\neq |\beta|,\\
\\
\frac{1}{2}t\,\sin(\alpha t), \ &\text{for}\ |\alpha|= |\beta|.
\end{cases}
\label{re-int}
\end{equation}

There are two key facts which allow us to argue for \eqref{ratio}. The first one is the localization in frequency of our data, which is enforced by choosing
\begin{equation*}
B_N=[N,N+1], \qquad (\forall)\,N\geq 1.
\end{equation*}
Coupled with \eqref{u0n-u1n}, this localization easily implies
\begin{equation}
\left\|u_0^N\right\|_{H^{s}} + \left\|u_1^N\right\|_ {H^{s}}\,\simeq\,N^{s}.
\label{data-hs}
\end{equation}
The second important point is that we are interested only in the output of $A_p(u^N_0,u^N_1)(t)$ at preferred frequencies, depending on the parity of $p$. This enables us to have control on the relative size of the parameter $\beta$ in \eqref{ab}, which in turn reduces the argument to obtaining good asymptotics for the generic term. 


\subsection{Argument for $p$ even}
In this case, we restrict our attention to the behavior of $\widehat{A_p(u^N_0,u^N_1)(t)}$ on the interval $[1/4,1/2]$ and first deduce that
\begin{equation}
\aligned
\left\|A_{p}\left(u_0^N,u_1^N\right)(t)\right\|_{H^s}\,&\geq\, \left\|A_{p}\left(u_0^N,u_1^N\right)(t)\right\|_{H^{s}([\frac{1}{4},\frac{1}{2}])}\\
&\simeq\, \left\|A_{p}\left(u_0^N,u_1^N\right)(t)\right\|_{L^2([\frac{1}{4},\frac{1}{2}])}.
\endaligned
\label{aphs}
\end{equation}
Next, due to \eqref{ab}, we obtain that for $N$ sufficiently large (depending on $p$) we must have an equal number of $+1$s and $-1$s in \eqref{ab} for $\xi\in  [1/4,1/2]$ to be true. Thus, eventually relabelling the indices, we can write 
\begin{equation*}
\xi=a_1-a_2+\ldots+a_{p-1}-a_p, \qquad \beta=\l(a_1)-\l(a_2)+\ldots+\l(a_{p-1})-\l(a_p).
\end{equation*}
 
Following this, we use \eqref{la-lb} to infer that
\begin{equation}
\left|\lambda(a)-\lambda(b)\right|\,\lesssim \,\frac{1}{N^3}, \qquad (\forall)\, a, b \in B_N, 
\label{la-lb-n}
\end{equation}
which leads to $|\beta|\lesssim 1/N^3$. We also notice that $\alpha=\l(\xi)\simeq 1$ if $\xi\in  [1/4,1/2]$. On the basis of \eqref{re-int}, we derive that for such values of $\alpha$ and $\beta$,
\begin{equation*}
\text{Re}\left\{\int_0^t\,\sin(\alpha(t-\tau))\,e^{i\beta \tau}\,d\tau\right\}\simeq \sin^2(\alpha t)+\text{O}\left(\frac{1}{N^6}\right)
\end{equation*}
holds true if $0<t<1$ and $N$ is large enough.

These facts tell us that, for fixed $0<t<1$, the real part of $\widehat{A_p(u^N_0,u^N_1)(t)}$ is correctly described by the real part of the generic term in \eqref{apn}. As a consequence, we obtain 
\[
\liminf_{N\to\infty} \left\|A_{p}\left(u_0^N,u_1^N\right)(t)\right\|_{L^2([\frac{1}{4},\frac{1}{2}])} \gtrsim\left(\int_\frac{1}{4}^\frac{1}{2}\,\sin^4(\lambda(\xi)t)\,d\xi\right)^\frac{1}{2}
\]
and, factoring in \eqref{data-hs} and \eqref{aphs}, we conclude that
\begin{equation*}
\lim_{N\to \infty}\, \frac{\left\|A_p\left(u^N_0,u^N_1\right)(t)\right\|_{H^s}}{\left(\left\|u_0^N\right\|_{H^{s}} + \left\|u_1^N\right\|_ {H^{{s}}}\right)^p}\,=\,\infty, \qquad (\forall)\,0<t<1.
\end{equation*}
This proves \eqref{ratio} in the case when $p$ is even.


\subsection{Argument for $p$ odd}
For this scenario, we focus on how $\widehat{A_p(u^N_0,u^N_1)(t)}$ evolves on the interval $[N,N+1]$ and, accordingly, proceed with
\begin{equation}
\aligned
\left\|A_{p}\left(u_0^N,u_1^N\right)(t)\right\|_{H^s}\,&\geq\, \left\|A_{p}\left(u_0^N,u_1^N\right)(t)\right\|_{H^{s}([N,N+1])}\\
&\simeq\, N^s\left\|A_{p}\left(u_0^N,u_1^N\right)(t)\right\|_{L^2([N,N+1])}.
\endaligned
\label{aphs-v2}
\end{equation}
Arguing as in the even case, we deduce that the representation of $\xi\in [N,N+1]$ in \eqref{ab} requires precisely one more $+1$ than $-1$s. Hence, we obtain 
\begin{equation*}
\xi=a_1-a_2+\ldots-a_{p-1}+a_p, \qquad \beta=\l(a_1)-\l(a_2)+\ldots-\l(a_{p-1})+\l(a_p),
\end{equation*}
following a possible relabelling of the indices.

Next, it is straightforward to derive
\begin{equation*}
\left|\lambda(a)-1\right|\,\simeq \,\frac{1}{N^2}, \qquad (\forall)\, a\in B_N.
\end{equation*}
Jointly with \eqref{la-lb-n}, this estimate implies
\begin{equation*}
\alpha=1+\text{O}\left(\frac{1}{N^2}\right),\qquad -\beta=1+\text{O}\left(\frac{1}{N^2}\right).
\end{equation*}
Invoking \eqref{re-int} again, we infer that
\begin{equation*}
\text{Re}\left\{\int_0^t\,\sin(\alpha(t-\tau))\,e^{i\beta \tau}\,d\tau\right\}\simeq t\sin(t)+\text{O}\left(\frac{1}{N^2}\right)
\end{equation*}
is valid for $0<t<1$ and $N$ big enough.

As in the case when $p$ is even, the real part of the generic term in \eqref{apn} describes accurately the real part of $\widehat{A_p(u^N_0,u^N_1)(t)}$ and, thus,
\[
\left\|A_{p}\left(u_0^N,u_1^N\right)(t)\right\|_{L^2([N,N+1])} \simeq t\sin(t)+\text{O}\left(\frac{1}{N^2}\right).
\]
Now, we can use \eqref{data-hs} and \eqref{aphs-v2} to conclude that
\[
\frac{\left\|A_p\left(u^N_0,u^N_1\right)(t)\right\|_{H^s}}{\left(\left\|u_0^N\right\|_{H^{s}} + \left\|u_1^N\right\|_ {H^{{s}}}\right)^p}\,\simeq\,\frac{t\sin(t)+\text{O}(1/N^2)}{N^{s(p-1)}}, \qquad (\forall)\,0<t<1,
\]
which yields \eqref{ratio} also in the odd case.

\section*{Acknowledgements}
The first author was supported in part by a grant from the Simons Foundation $\#\, 359727$. Both authors are grateful to Allan Greenleaf and Alex Iosevich for helpful discussions during the preparation of this manuscript.

\bibliographystyle{amsplain}
\bibliography{bousbib}

\providecommand{\bysame}{\leavevmode\hbox to3em{\hrulefill}\thinspace}
\providecommand{\MR}{\relax\ifhmode\unskip\space\fi MR }
\providecommand{\MRhref}[2]{%
  \href{http://www.ams.org/mathscinet-getitem?mr=#1}{#2}
}
\providecommand{\href}[2]{#2}
\begin{thebibliography}{10}

\bibitem{BL76}
J.~Bergh and J.~L\"{o}fstr\"{o}m, \emph{Interpolation spaces. {A}n
  introduction}, Springer-Verlag, Berlin-New York, 1976, Grundlehren der
  Mathematischen Wissenschaften, No. 223.

\bibitem{BT09}
J.~L. Bona and N.~Tzvetkov, \emph{Sharp well-posedness results for the {BBM}
  equation}, Discrete Contin. Dyn. Syst. \textbf{23} (2009), no.~4, 1241--1252.

\bibitem{B97}
J.~Bourgain, \emph{Periodic {K}orteweg de {V}ries equation with measures as
  initial data}, Selecta Math. (N.S.) \textbf{3} (1997), no.~2, 115--159.

\bibitem{CO07}
Y.~Cho and T.~Ozawa, \emph{On small amplitude solutions to the generalized
  {B}oussinesq equations}, Discrete Contin. Dyn. Syst. \textbf{17} (2007),
  no.~4, 691--711.

\bibitem{CLS86}
P.~A. Clarkson, R.~J. LeVeque, and R.~Saxton, \emph{Solitary-wave interactions
  in elastic rods}, Stud. Appl. Math. \textbf{75} (1986), no.~2, 95--121.

\bibitem{CM02}
A.~Constantin and L.~Molinet, \emph{The initial value problem for a generalized
  {B}oussinesq equation}, Differential Integral Equations \textbf{15} (2002),
  no.~9, 1061--1072.

\bibitem{GHK14}
D.-A. Geba, A.~A. Himonas, and D.~Karapetyan, \emph{Ill-posedness results for
  generalized {B}oussinesq equations}, Nonlinear Anal. \textbf{95} (2014),
  404--413.

\bibitem{GTV97}
J.~Ginibre, Y.~Tsutsumi, and G.~Velo, \emph{On the {C}auchy problem for the
  {Z}akharov system}, J. Funct. Anal. \textbf{151} (1997), no.~2, 384--436.

\bibitem{M78}
V.~G. Makhankov, \emph{Dynamics of classical solitons (in nonintegrable
  systems)}, Phys. Rep. \textbf{35} (1978), no.~1, 1--128.

\bibitem{MR04}
L.~Molinet and F.~Ribaud, \emph{Well-posedness results for the generalized
  {B}enjamin-{O}no equation with small initial data}, J. Math. Pures Appl. (9)
  \textbf{83} (2004), no.~2, 277--311.

\bibitem{SS98}
J.~Shatah and M.~Struwe, \emph{Geometric wave equations}, Courant Lecture Notes
  in Mathematics, vol.~2, New York University, Courant Institute of
  Mathematical Sciences, New York; American Mathematical Society, Providence,
  RI, 1998.

\bibitem{T99}
N.~Tzvetkov, \emph{Remark on the local ill-posedness for {K}d{V} equation}, C.
  R. Acad. Sci. Paris S\'er. I Math. \textbf{329} (1999), no.~12, 1043--1047.

\bibitem{WC02}
S.~Wang and G.~Chen, \emph{The {C}auchy problem for the generalized {IMB}q
  equation in {$W^{s,p}(\bold R^n)$}}, J. Math. Anal. Appl. \textbf{266}
  (2002), no.~1, 38--54.

\end{thebibliography}

\end{document}